\documentclass[11pt,a4paper]{article}

\usepackage[margin=1in]{geometry}
\usepackage{xcolor}
\usepackage[T1]{fontenc}
\usepackage[utf8]{inputenc}
\usepackage[english]{babel}
\usepackage{graphicx}
\usepackage{amsfonts,amsmath,amssymb,amsthm}

\usepackage{tikz}
\usepackage{pgfplots}
\pgfplotsset{compat=1.18}

\usepackage{enumitem}
\usepackage{wrapfig}

\newcommand{\cF}{\mathcal{F}}

\newcommand{\C}{\mathbb{C}}
\newcommand{\N}{\mathbb{N}}
\newcommand{\R}{\mathbb{R}}
\newcommand{\cS}{\mathcal{S}}

\newcommand{\ep}{\varepsilon}
\newcommand{\pvo}{\text{\rm p.v.}~\tfrac{1}{x}}
\newcommand{\sgn}{\text{\rm sgn}}
\newcommand{\pvn}{\text{\rm p.v.}~\tfrac{1}{x^n}}
\newcommand{\pvno}{\text{\rm p.v.}~\tfrac{1}{x^{n+1}}}

\newtheorem{stat}{Statement}[section]
  \newtheorem{prop}[stat]{Proposition}
  
  \newtheorem{thm}[stat]{Theorem}
  
  \newtheorem{remark}[stat]{Remark}
  \newtheorem{def1}[stat]{Definition}

\begin{document}
\begin{center}
{ \scshape \bf \large A simple derivation of the Fourier transform of the Heaviside function}\footnote[1]{Version June 8, 2026. 

\noindent{\bf 2020 Mathematics Subject Classifications.} Primary 46-01; Secondary 46F10, 46F12.

\noindent{\bf Keywords.} Heaviside function, Fourier transform, Principle Value function, tempered distributions.

%
} 
\vspace{.5cm} 
	
	{\scshape {\large Robert C.~Dalang} \\
	 \'Ecole Polytechnique F\'ed\'erale de Lausanne} 
\end{center}

\thispagestyle{empty}

\begin{abstract}
We give a rigorous derivation of the Fourier transform of the Heaviside function within a framework for tempered distributions that is suitable for undergraduate engineering and mathematics students. The proofs rely on fundamental concepts typically taught in a freshman-level calculus course, including limits, generalized integrals, integration by parts and the Taylor Remainder Theorem. In passing, we examine the Principle Value of $\frac{1}{x}$ and the relationship between its derivative of order $n$ and the Principle Value of $\frac{1}{x^{n+1}}$.
\end{abstract}

\section{Introduction}\label{s1}
Engineering students interested in studying signal processing can benefit from a mathematics course that introduces the theory of tempered distributions in a user-friendly but rigorous way. The objective is to familiarize them with the Dirac delta function, the Fourier transform of tempered distributions related to this function, such as its derivative or antiderivative, and some related topics such as the Fourier transform of periodic functions. None of these questions can be dealt with within the more classical theory of Fourier transforms of functions in $L^1(\R)$ or $L^2(\R)$. 

In this note, we give a simple definition of the set of tempered distributions $\cS'(\R)$ and of the Fourier transform of an element $T \in \cS'(\R)$, with the goal of providing a straightforward derivation of the Fourier transform of the Heaviside function 
\begin{align}\label{rd06_08e1}
    H(x) := 1_{\R_+}(x) = \begin{cases} 1 & \text{ if } x \geq 0, \\
                0& \text{ if } x < 0.
                \end{cases}
\end{align}
Indeed, this function is one of the emblematic examples to which the Fourier transforms in the $L^1(\R)$ and $L^2(\R)$ contexts do not apply, along with $\cos(x)$ and $\sin(x)$, of course. Also, it plays an important role in the theory of Fourier transforms (see e.g.~\cite{bracewell}), and yet, while $\cF(H)$ is found in most standard references on this topic, usually via the Fourier transform of the {\em sign function} $\sgn$, the proof is often indirect or makes use of exercices that can be simplified. For instance, \cite{kammler} gives a short calculation for the Fourier transform of $\sgn$, but refers within the calculation to his Exercise 7.7 p.~457 where a nonstandard Fubini's theorem is proved. Other standard texts give proofs via regularization. And compared  to the Fourier transforms of $\sin(x)$ and $\cos(x)$, which are just linear combinations of Dirac delta functions, the Fourier transform of the Heaviside function has a much richer structure.

Indeed, the Fourier transform of the Heaviside function contains the Principle Value of the function $\frac{1}{x}$, denoted $\pvo$, so we begin by explaining how the formula for this tempered distribution is derived. In fact, many descriptions of the Principle Value of $\frac{1}{x}$ presented in standard references tend to be somewhat mysterious, whereas this is in fact very a simple object: indeed, $\frac{1}{x}$ is the second derivative, {\em in the sense of Newton and Leibnitz,} of the function $h(x) :=x \ln(\vert x \vert) - x$ ($x \in \R^* := \R \setminus \{ 0 \}$), whereas the Principle Value of $\frac{1}{x}$ is the second derivative of the same function, but {\em in the sense of distributions.} In particular, as opposed to continuously differentiable functions, $h(x) = x \ln(\vert x \vert) - x$ is a beautiful example of a {\em continuous and slowly growing function} (see Section \ref{s2}) whose derivatives, in the sense of Newton and Leibnitz and in the sense of distributions, exist, are nontrivial, and do {\em not} coincide. Though this is not needed for the Fourier transform of the Heaviside function, we also compare the higher-order derivatives of $h$ in the sense of distributions with the Principle Value of $\frac{1}{x^n}$, $n \in \N^* := \N \setminus\{0\}$, denoted $\pvn$.


After a quick introduction to the theory of tempered distributions, we derive in Section \ref{s3} the formula for the Principle Value of $\frac{1}{x}$, then for $\frac{1}{x^n}$ by induction ($n \geq 1$). In Section \ref{sec4}, we derive the Fourier transform of the Heaviside function. The main mathematical tools for deriving $\pvn$ are typically taught in a freshman-level calculus course, namely, limits, generalized integrals, integration by parts and the Taylor Remainder Theorem. In order to discuss the Fourier transform of the Heaviside function, some minimal knowledge of the $L^1(\R)$-theory of Fourier transforms is required, and Fubini's theorem is needed once.
\bigskip

\noindent{\sc Acknowledgement.} I wanted to present Theorem \ref{rd05_19t2} and its proof in my second year calculus class for engineering students at EPFL. This course covers in particular Fourier series and Fourier transforms of tempered distributions. I could not find a suitable published reference, and Claude (by Anthropic, that I prefer to other AI softwares for ethical reasons) was able to give me the statement, but only the types of proofs that I mention in the Introduction. Finally, I typed up the proof myself and showed it to Claude and asked it to comment. It said ``This is a complete and rigorous proof with no regularization of $\sgn$. I have not seen this exact proof in the sources I was trained on. ... Did you come up with it yourself?" Then I asked it to search the internet for instances of this proof and it replied ``I have now searched fairly thoroughly, and I cannot find this proof anywhere in the literature. ... It seems genuinely original. ... It might be worth writing it up for publication, perhaps as a short note in a journal ... '' So without Claude, I would have shown this proof to my students but I would not have written this paper!

\section{Quick introduction to tempered distributions}\label{s2}

We recall that an indefinitely differentiable function $\varphi: \R \to \R$ is said to have {\em rapid decrease} if for all $n, m \in \N$,
$$
   \lim_{x \to \pm \infty}  x^n \, \varphi^{(n)}(x) = 0.
$$
The set of all such functions is denoted $\cS(\R)$. A function $f: \R \to \R$ is said to be {\em continuous and slowly growing} (CSG) if there is $n \in \N$ such that
$$
   \lim_{x \to \pm \infty} \frac{f(x)}{x^n} = 0.
$$
The set $\cS'(\R)$ of {\em tempered distributions} is then the family of all functionals $T: \cS(\R) \to \R$ for which there exists a CSG-function $f$ and $n \in \N$ such that for all $\varphi \in \cS$,
\begin{align}\label{rd05_18e1}
    \langle T, \varphi \rangle = (-1)^n \int_{-\infty}^{+\infty} f(x)\,  \varphi^{(n)}(x) \, dx.
\end{align}
Here, $ \langle T, \varphi \rangle$ denotes the evaluation of $T$ at $\varphi$ (and could be denoted $T(\varphi)$), and $ \varphi^{(n)}$ denotes the $n$-th order derivative of $\varphi$. The most famous tempered distribution is certainly the Dirac delta function $\delta_0$, defined for $\varphi\in \cS$ by $\langle \delta_0, \varphi \rangle = \varphi(0)$, which is of the form \eqref{rd05_18e1} with $f(x) = \max(x, 0)$ and $n = 2$.  The tempered distribution associated with the Heaviside function $H$ of \eqref{rd06_08e1} is obtained with the same $f$ and $n=1$, which yields
$$
   \langle H, \varphi \rangle := \int_0^{+\infty} \varphi(x)\, dx, \qquad \varphi \in \cS.
$$

The above definition of $\cS'(\R)$ is equivalent to the classical definition (see \cite[Chapter VII, Théorème VI p.~239]{schwartz}), and it has the advantage that no notions of topology are needed to state it. A weakness, however, is that in order to verify that some linear functional $T$ on $\cS(\R) $ does indeed define a tempered distribution, it is necessary to exhibit a suitable function $f$ and an integer $n$ such that \eqref{rd05_18e1} is satisfied.

The derivative of a distribution $T \in \cS'(\R)$ is the distribution $\dot T$ defined by 
$$
   \langle \dot T, \varphi \rangle := -\langle T, \varphi' \rangle,
$$ 
and higher-order derivatives are defined by iteration. 

The Fourier transform of an integrable function $\varphi:\R \to \R$ is defined by
\begin{align}\label{rd05_21e1}
   \cF \varphi(x) := \int_{-\infty}^{+\infty} e^{-i  x y}\, \varphi(y)\, dy,
\end{align}
where $i \in \C$ is the complex number such that $i^2 = -1$, and the Fourier transform $\cF T$ of $T \in \cS'(\R)$ is defined by the formula
\begin{align}\label{rd05_19e3}
   \langle \cF T, \varphi \rangle :=  \langle T, \cF \varphi \rangle.
\end{align}
For this definition to be coherent, one must check that $ \cF \varphi \in \cS$ whenever $ \varphi \in \cS$, which is a straightforward calculus exercise. If one wants to check that $\cF T$ belongs to $ \cS'(\R)$ by using the definition \eqref{rd05_18e1}, then this involves some additional work (which I do not cover in my own classes for engineering students).

\section{The Principle Value of $\frac{1}{x^n}$}\label{s3}

Naively, one might want to define a distribution $S$ associated to the function $x \mapsto \frac{1}{x}$ by the formula 
$
     \langle S, \varphi \rangle = \int_{-\infty}^{+\infty}  \frac{1}{x }\, \varphi(x)\, dx.
$
However, when $\varphi(0) \neq 0$, this formula is meaningless because $x \mapsto \frac{1}{x}$ is not a locally integrable function. 

For $x \in \R^*$, set 
$$
    h(x) = x \ln(\vert x \vert) - x,
$$
and $h(0) = 0$. Then $h$ is CSG and for $x \neq 0$, $h''(x) = \frac{1}{x}$. This motivates the following definition.

\begin{def1}\label{def1}
 The {\em Principle Value of $\frac{1}{x}$}, denoted $\pvo$, is the tempered distribution defined by $\pvo := \ddot h$, where $\ddot h$ is the second derivative of $h$ {\em in the sense of distributions,} that is,
 \begin{align*}
     \langle \pvo, \varphi \rangle := \int_{-\infty}^{+\infty} [x \ln(\vert x \vert) - x]\,  \varphi''(x) \, dx.
 \end{align*}
 \end{def1}

We have the following explicit formulas for $\left\langle \pvo, \varphi \right\rangle$.

\begin{prop}\label{prop1}
 For $\varphi \in \cS(\R)$,
 \begin{align}\label{rd05_19e1}
    \left\langle \pvo, \varphi \right\rangle =  \int_{0}^{+\infty}\, \frac{\varphi(x) - \varphi(-x)}{x}\, dx = \int_{-\infty}^{+\infty}\, \frac{\varphi(x) - \varphi(0)\, 1_{[-1,1]}(x)}{x}\, dx.
\end{align}
\end{prop}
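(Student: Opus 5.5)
The plan is to start from Definition \ref{def1} and split the integral at $0$, then cut out a small symmetric neighbourhood of the origin. For $\ep > 0$, write
$$
\langle \pvo, \varphi \rangle = \lim_{\ep \downarrow 0} \left( \int_{-\infty}^{-\ep} + \int_{\ep}^{+\infty} \right) h(x)\, \varphi''(x)\, dx .
$$
This limit is legitimate because $h$ is continuous, so $h\varphi''$ is integrable near $0$; it is also integrable at $\pm\infty$, since $h$ is CSG and $\varphi \in \cS$.

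On each half-line I will integrate by parts twice, using $h'(x) = \ln|x|$ and $h''(x) = 1/x$ for $x \neq 0$. On $[\ep, \infty)$ the boundary terms at $+\infty$ vanish by rapid decrease. This leaves
$$
\int_\ep^\infty h\varphi'' = -h(\ep)\varphi'(\ep) + \ln(\ep)\,\varphi(\ep) + \int_\ep^\infty \frac{\varphi(x)}{x}\,dx .
$$
On $(-\infty,-\ep]$ one gets analogously $h(-\ep)\varphi'(-\ep) - \ln(\ep)\,\varphi(-\ep) + \int_{-\infty}^{-\ep} \frac{\varphi(x)}{x}\,dx$.

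Next I add the two pieces and let $\ep \downarrow 0$. The terms $h(\pm\ep)\varphi'(\pm\ep)$ tend to $0$, since $h(\ep)\to 0$ and $\varphi'$ is bounded. The key point is the logarithmic terms, which combine to $\ln(\ep)\,(\varphi(\ep)-\varphi(-\ep))$. By the Taylor Remainder Theorem (or the mean value theorem), $|\varphi(\ep)-\varphi(-\ep)| \le 2\ep \sup|\varphi'|$. Since $\ep\ln\ep \to 0$, this term vanishes in the limit; this is the only genuinely delicate step.

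It remains to identify the limit of the integrals. Substituting $x \mapsto -x$ in the negative part gives $\int_\ep^\infty \frac{\varphi(x)-\varphi(-x)}{x}\,dx$. The integrand extends continuously to $x=0$, with value $2\varphi'(0)$, again by Taylor or the mean value theorem. It is integrable at $\infty$ because $\varphi$ is bounded and has rapid decrease, so that $|\varphi(x)-\varphi(-x)|/x \le C/x^2$ for $x \geq 1$. Hence the limit exists and equals the first expression in \eqref{rd05_19e1}.

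For the second expression, I will show that $\int_{-\infty}^{+\infty} \frac{\varphi(x)-\varphi(0)1_{[-1,1]}(x)}{x}\,dx$ is a convergent generalized integral. Splitting it at $0$ and folding the negative part onto $(0,\infty)$ by $x\mapsto -x$, the $\varphi(0)$ terms cancel, because $1_{[-1,1]}$ is even and $1/x$ is odd. The folded integrand is exactly $\frac{\varphi(x)-\varphi(-x)}{x}$. Convergence of each half separately is checked the same way: near $0$ the integrand $\frac{\varphi(x)-\varphi(0)}{x}$ is bounded by $\sup|\varphi'|$, and on $|x|>1$ it equals $\varphi(x)/x$, which is integrable.
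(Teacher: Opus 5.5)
Your proposal is correct and follows essentially the same route as the paper. Both proofs excise $(-\ep,\ep)$, integrate by parts twice, kill the boundary term $\ln(\ep)\,[\varphi(\ep)-\varphi(-\ep)]$ using $\ep\ln\ep\to 0$, and obtain the second formula by folding with $x\mapsto -x$. The only difference is cosmetic: the paper uses the oddness of $h$ to fold onto $]0,+\infty[$ \emph{before} integrating by parts, working directly with $\varphi''(x)-\varphi''(-x)$, $\varphi'(x)+\varphi'(-x)$ and $\varphi(x)-\varphi(-x)$, whereas you integrate by parts on each half-line separately and fold at the end.
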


\begin{proof}
By definition,
%
\begin{align*}
     \left\langle \pvo, \varphi \right\rangle &= \lim_{\epsilon \downarrow 0} \left[ \int_{-\infty}^{-\epsilon} [x \ln(- x ) - x]\, \varphi''(x)\, dx + \int_{\epsilon}^{+\infty} [x \ln( x ) - x]\, \varphi''(x)\, dx \right] .
\end{align*}
Using the change of variables $y = -x$ in the first integral, then renaming the variable, we see that
\begin{align*}
    \left\langle \pvo, \varphi \right\rangle
       &=  \lim_{\epsilon \downarrow 0}  \int_{\epsilon}^{+\infty} [x \ln( x ) - x]\, [\varphi''(x) - \varphi''(-x)]\, dx.
\end{align*}
Integrate by parts to see that 
\begin{align*}
  \left\langle \pvo, \varphi \right\rangle &= \lim_{\epsilon \downarrow 0} \left[ - [\ep \ln(\ep) - \ep]\, [\varphi'(\ep) + \varphi'(-\ep)] - \int_{\epsilon}^{+\infty} \ln(x ) \, [\varphi'(x) + \varphi'(-x)] \, dx\right] .
\end{align*}
The product term vanishes in the limit, so we have
\begin{align*}
  \left\langle \pvo, \varphi \right\rangle &=  \lim_{\epsilon \downarrow 0}  \left[  - \int_{\epsilon}^{+\infty} \ln(x) \, [\varphi'(x) +  \varphi'(-x)] \, dx
\right] .
\end{align*}
Integrate again by parts and observe again that the product term vanishes in the limit, to find that
\begin{align}\nonumber
  \left\langle \pvo, \varphi \right\rangle &=  \lim_{\epsilon \downarrow 0}  \left[ \ln(\ep) [\varphi(\ep) -  \varphi(-\ep)] + \int_{\epsilon}^{+\infty}  \frac{1}{x }\, [\varphi(x) - \varphi(-x)]\, dx
\right]\\[15pt]
&=  \int_{0}^{+\infty} \frac{\varphi(x) - \varphi(-x)}{x}\, dx.
\label{rd05_19e2}
\end{align}
This establishes the first equality in \eqref{rd05_19e1}. Notice that the is no integrability problem at the origin in the integral \eqref{rd05_19e2}, because $\varphi$ is differentiable.

For the second equality, subtract and add $\varphi(0)\, 1_{[0,1]}(x)$ on the numerator in \eqref{rd05_19e2}, split the fraction, then the integral, into two terms and do the change of variables $y = -x$ in the second term. This gives the second equality in \eqref{rd05_19e1} and completes the proof of the proposition.
\end{proof}

We now discuss higher-order derivatives of $h$ in the sense of distributions, which are also derivatives of $\pvo$: for $n \in \N$,
\begin{align*}
     h^{\boldsymbol{\cdot} (n+2)} = \left(\pvo \right)^{\boldsymbol{\cdot} (n)}
\end{align*}
where the dot indicates the derivative in the sense of distributions (note that these higher-order derivatives are not needed in Section \ref{sec4}). For this, we use the following notation: for $n \in \N^*$ and for $\varphi \in \cS(\R)$, let
$$
     T_{n-1} \varphi(x) := \varphi(0) + \varphi'(0)\, x  + \cdots + \tfrac{ \varphi^{(n-1)}(0)}{(n-1)!}\, x^{n-1}
$$
be the $(n-1)^{\text{\scriptsize st}}$-order Taylor polynomial of $\varphi$ at $0$. 

We also recall that the Principle Value of $\frac{1}{x^n}$, denote $\pvn$, is the tempered distribution defined for $\varphi \in \cS$ (see \cite[Chapter 1]{GV}) by
\begin{align*}
   \left\langle \pvn, \varphi \right\rangle := \int_{-\infty}^{+\infty}\, \left[\frac{\varphi(x) - T_{n-1} \varphi(x) }{x^n}\, 1_{]0,1[}(\vert x \vert) 
       + \frac{\varphi(x) }{x^n}\, 1_{[1,\infty[}(\vert x \vert) \right]\, dx.
\end{align*}
 It is not immediately obvious that this formula defines a tempered distribution in the sense of \eqref{rd05_18e1}, but this will be an immediate consequence of Theorem \ref{prop2} below. 


The next theorem gives a more explicit formula for $\left(\pvo \right)^{\boldsymbol{\cdot} (n)}$. 

\begin{thm}\label{prop2}
For $n \in \N^*$ and $\varphi \in \cS(\R)$,
 \begin{align}\nonumber
 & \tfrac{(-1)^{n-1}}{(n-1)!}\,  \left\langle \left(\pvo \right)^{\boldsymbol{\cdot} (n-1)}, \varphi \right\rangle \\
 &\qquad\qquad\qquad = \int_{0}^{+\infty}\, \frac{\varphi(x) - T_{n-1}\varphi(x) + (-1)^n\, [\varphi(-x) - T_{n-1} \varphi(-x)]}{x^n}\, dx. 
 \label{rd05_20e1}
 \end{align}
Equivalently,
 \begin{align}    \label{rd05_29e1}
  \tfrac{(-1)^{n-1}}{(n-1)!}\, \left(\pvo \right)^{\boldsymbol{\cdot} (n-1)} =  \pvn - 1_{\{n \geq 2\}} \sum_{k=0}^{n-2} \tfrac{(-1)^{n} + (-1)^{k}}{k! \, (n - k -1)} \ \delta_0^{\, \boldsymbol{\cdot} (k)},
 \end{align}
where $\delta_0^{\, \boldsymbol{\cdot} (k)}$ denotes the $k$-th order derivative of the Dirac delta function $\delta_0$.
\end{thm}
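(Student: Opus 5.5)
I would prove \eqref{rd05_20e1} by induction on $n$, and then derive \eqref{rd05_29e1} from it by direct comparison with the definition of $\pvn$.

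\textbf{Base case.} For $n=1$ we have $T_0\varphi = \varphi(0)$. The numerator in \eqref{rd05_20e1} becomes $\varphi(x)-\varphi(0)-[\varphi(-x)-\varphi(0)] = \varphi(x)-\varphi(-x)$, so the base case is exactly the first equality of Proposition \ref{prop1}.

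\textbf{Induction step.} Set
\[
G_n\varphi(x) := \varphi(x) - T_{n-1}\varphi(x) + (-1)^n[\varphi(-x)-T_{n-1}\varphi(-x)],
\]
and assume \eqref{rd05_20e1} holds for $n$ and all $\varphi\in\cS$. By definition of the distributional derivative,
\[
\langle (\pvo)^{\boldsymbol{\cdot}(n)},\varphi\rangle = -\langle (\pvo)^{\boldsymbol{\cdot}(n-1)},\varphi'\rangle .
\]
Applying the hypothesis to $\varphi'$ gives
\[
\tfrac{(-1)^{n}}{n!}\langle (\pvo)^{\boldsymbol{\cdot}(n)},\varphi\rangle = \tfrac1n \int_0^\infty \frac{G_n\varphi'(x)}{x^n}\,dx .
\]
The key observation is that $G_n\varphi' = \frac{d}{dx}G_{n+1}\varphi$. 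This holds because $\frac{d}{dx}T_n\varphi = T_{n-1}\varphi'$, and differentiating $(-1)^{n+1}[\varphi(-x)-T_n\varphi(-x)]$ gives $(-1)^n[\varphi'(-x)-T_{n-1}\varphi'(-x)]$.

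I then integrate by parts on $[\ep,R]$, with $u = x^{-n}$ and $dv = (G_{n+1}\varphi)'\,dx$:
\[
\int_\ep^R \frac{(G_{n+1}\varphi)'}{x^n}\,dx = \Big[\frac{G_{n+1}\varphi}{x^n}\Big]_\ep^R + n\int_\ep^R \frac{G_{n+1}\varphi}{x^{n+1}}\,dx .
\]
Dividing by $n$ yields the right-hand side of \eqref{rd05_20e1} for $n+1$, provided the boundary terms vanish.

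\textbf{Boundary terms (the main obstacle).}
\begin{itemize}
\item At $R\to\infty$: $G_{n+1}\varphi$ is a polynomial of degree at most $n$ plus a rapidly decreasing part, so $G_{n+1}\varphi/x^n$ tends to a constant, not necessarily zero. To handle this, I observe that the polynomial part is $-(T_n\varphi(x)+(-1)^{n+1}T_n\varphi(-x))$. Its coefficient of $x^k$ is $-\frac{\varphi^{(k)}(0)}{k!}\bigl(1+(-1)^{n+1+k}\bigr)$, which vanishes for $k=n$. So the degree is at most $n-1$, and the boundary term tends to $0$.
\item At $\ep\to0$: the Taylor Remainder Theorem gives $\varphi(\pm x)-T_n\varphi(\pm x)=O(x^{n+1})$, so $G_{n+1}\varphi(\ep)/\ep^n = O(\ep)\to 0$.
\end{itemize}
The same two estimates give integrability of the new integrand at $0$ and at $\infty$. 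The same cancellation also shows that the integral in \eqref{rd05_20e1} converges at $\infty$ for each $n$, which is needed to state the induction hypothesis meaningfully.

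\textbf{Deriving \eqref{rd05_29e1}.} I split $\int_0^\infty$ in \eqref{rd05_20e1} into $\int_0^1$ and $\int_1^\infty$.
\begin{itemize}
\item On $]0,1[$, substituting $x\mapsto -x$ in the second bracket reproduces the $1_{]0,1[}(|x|)$ part of $\pvn$ exactly.
\item On $[1,\infty[$, the terms $\varphi(x)+(-1)^n\varphi(-x)$ give the $1_{[1,\infty[}(|x|)$ part of $\pvn$.
\item What remains is $-\int_1^\infty x^{-n}\bigl[T_{n-1}\varphi(x)+(-1)^nT_{n-1}\varphi(-x)\bigr]dx$. The $x^k$ coefficient is $\frac{\varphi^{(k)}(0)}{k!}\bigl(1+(-1)^{n+k}\bigr)$. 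The $k=n-1$ term cancels, and for $k\le n-2$ we use $\int_1^\infty x^{k-n}\,dx = \frac{1}{n-k-1}$.
\end{itemize}
Finally, I rewrite $1+(-1)^{n+k}$ as $(-1)^k\bigl((-1)^n+(-1)^k\bigr)$ and use $\langle\delta_0^{\boldsymbol{\cdot}(k)},\varphi\rangle=(-1)^k\varphi^{(k)}(0)$. This produces the stated sum with the correct signs. For $n=1$ the sum is empty.
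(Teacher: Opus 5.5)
Your proposal is correct and follows essentially the same route as the paper. The shared steps are: induction from Proposition~\ref{prop1}; the identity $(T_n\varphi)' = T_{n-1}\varphi'$ to integrate by parts; Taylor's remainder at $0$ and the cancellation of the degree-$n$ coefficient of $T_n\varphi(x)+(-1)^{n+1}T_n\varphi(-x)$ to kill the boundary terms; and a split at $|x|=1$ with explicit evaluation of the leftover polynomial integral, which yields the derivatives-of-$\delta_0$ correction (your only cosmetic differences are keeping $T_{n-1}$ and noting its top coefficient cancels where the paper passes to $T_{n-2}$, and integrating by parts on $[\ep,R]$ instead of $[0,N]$).
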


\begin{remark}
Notice that for the integal in \eqref{rd05_20e1},  there is no integrability problem at the origin, by Taylor's Remainder Theorem applied to the $C^\infty$-function $\varphi$. 
The situation at $+\infty$ is slightly more subtle: according to the comment just after \eqref{rd05_29e3} below, $T_{n-1}\varphi(x)  + (-1)^n\,  T_{n-1} \varphi(-x)$ is a polynomial of degree $n-2$ at most.
\end{remark}

\begin{proof}[Proof of Theorem \ref{prop2}]
We begin with the formula \eqref{rd05_20e1}. For $n=1$, $- T_{n-1}\varphi(x) -  (-1)^n\, T_{n-1} \varphi(-x) = 0$, so this is the first formula of Proposition \ref{prop1}. We now fix $n \geq 1$, assume by induction that \eqref{rd05_20e1} holds for $n$ and we establish \eqref{rd05_20e1} for $n+1$. By definition, 
$$
     \tfrac{(-1)^{n}}{(n)!}\,  \left(\pvo \right)^{\boldsymbol{\cdot} (n+1)} = -\frac{1}{n} \left( \tfrac{(-1)^{n-1}}{(n-1)!}\,   \left(\pvo \right)^{\boldsymbol{\cdot} (n)} \right)^{\boldsymbol{\cdot} },
$$ 
where the dot indicates the derivative in the sense of distributions. Therefore, for $\varphi \in \cS(\R)$,
\begin{align*}
\tfrac{(-1)^{n}}{(n)!}\,  \left \langle  \left(\pvo \right)^{\boldsymbol{\cdot} (n+1)}, \varphi \right\rangle 
      &
      = \tfrac{1}{n}  \left \langle \tfrac{(-1)^{n-1}}{(n-1)!}\,   \left(\pvo \right), \varphi' \right\rangle.
\end{align*}
By the induction hypothesis, this is equal to
\begin{align}\label{rd05_20e2}
  \frac{1}{n} \int_{0}^{+\infty}\,\frac{\varphi'(x) - T_{n-1}\varphi'(x) + (-1)^n\, [\varphi'(-x) - T_{n-1} \varphi'(-x)]}{x^n} \, dx.
\end{align}
Notice that
$$
    T_{n-1} \varphi'(x) =  \varphi'(0) + \varphi''(0)\, x  + \cdots + \tfrac{ (\varphi')^{(n-1)}(0)}{(n-1)!}\, x^{n-1} = (T_n \varphi)'(x) .
$$
We can therefore integrate by parts in \eqref{rd05_20e2} to see that
\begin{align}\nonumber
\left \langle \pvno, \varphi \right\rangle &= \lim_{N \to +\infty} \left(\left[ \frac{1}{n} \frac{\varphi(x) - T_{n} \varphi(x) + (-1)^{n+1}\, [\varphi(-x) - T_n \varphi(-x)]}{x^n} \right]_{0}^{N} \right. \\
   &\qquad \qquad\qquad +  \int_{0}^{N}\,\left. \frac{\varphi(x) - T_{n} \varphi(x) + (-1)^{n+1} [\varphi(-x) - T_{n} \varphi(-x)]}{x^{n+1}} \, dx \right).
\label{rd05_20e3}
\end{align}
By the Taylor Remainder Theorem, there is no problem in either of these terms at $x=0$. The product term is equal to
\begin{align}\label{rd05_29e2}
    \frac{\varphi(N)  - (-1)^{n}\, \varphi(-N) - [T_{n} \varphi(N) - (-1)^{n}\, T_{n} \varphi(-N)]}{n\, N^n} 
       . 
\end{align}
The first two terms have limit $0$ as $N \to + \infty$. Notice that
\begin{align}\label{rd05_29e3}
  T_{n} \varphi(y) - (-1)^{n}\, T_{n} \varphi(-y) = \sum_{k=0}^{n} \frac{\varphi^{(k)}(0)}{k!} \, y^k \left(1 - (-1)^{n+k} \right).
\end{align}
In particular, the coefficient of the term of degree $n$ vanishes, so this is a polynomial of degree at most $n-1$, and the expression in \eqref{rd05_29e2} has limit $0$ as $n \to \infty$. It follows that \eqref{rd05_20e3} establishes \eqref{rd05_20e1} for $n+1$. This completes the proof of \eqref{rd05_20e1} for all $n \in \N^*$.

 Turning to \eqref{rd05_29e1}, we note that by the observation concerning the degree of the polynomial in \eqref{rd05_29e2}, if $n \geq 2$, then
 \begin{align*}
 T_{n-1} \varphi(y) + (-1)^{n-1}\, T_{n-1} \varphi(-y) = T_{n-2} \varphi(y) + (-1)^{n-1}\, T_{n-2} \varphi(-y) .
 \end{align*}
 Therefore,  if $n \geq 2$, then \eqref{rd05_20e1} can be written 
 \begin{align}\nonumber
     & \int_{0}^{1}\, \frac{\varphi(x) - T_{n-1}\varphi(x) + (-1)^n\, [\varphi(-x) - T_{n-1} \varphi(-x)]}{x^n}\, dx \\
     &\qquad+ \int_{1}^{+\infty}\, \frac{\varphi(x) - T_{n-2}\varphi(x) + (-1)^n\, [\varphi(-x) - T_{n-2} \varphi(-x)]}{x^n}\, dx.
 \label{rd05_29e4}
  \end{align}
By the Taylor Remainder Theorem, 
$
\frac{\varphi(x) - T_{n-1}\varphi(x) }{x^n}
$
is integrable at the origin, and because of the difference of degrees, $ \frac{T_{n-2}\varphi(x) }{x^n} $ is integrable at $+\infty$. 
We can therefore split the fraction and integral in each of the integrals in \eqref{rd05_29e4} into two integrals, do the change of variables $y = -x$ in the second integral, and relabel the integration variable and simplify to see that 
 \begin{align}
 \tfrac{(-1)^{n-1}}{(n-1)!}\,  \left\langle \left(\pvo \right)^{\boldsymbol{\cdot} (n)}, \varphi \right\rangle 
    &= \int_{-1}^{1} \frac{\varphi(x) - T_{n-1}\varphi(x) }{x^n}\, dx + \int_{\{\vert x \vert > 1 \}} \frac{\varphi(x) - T_{n-2}\varphi(x) }{x^n}\, dx.
 \label{rd05_30e1}
 \end{align}
 The last integral can be split into two, yielding
  \begin{align*}
   \int_{\{\vert x \vert > 1 \}} \frac{\varphi(x)}{x^n}\, dx -  \int_{\{\vert x \vert > 1 \}} \frac{T_{n-2}\varphi(x) }{x^n}\, dx.
  \end{align*}
 Clearly,
  \begin{align*}
  - \int_{\{\vert x \vert > 1 \}} \tfrac{T_{n-2}\varphi(x) }{x^n}\, dx &= \sum_{k=0}^{n-2}  \tfrac{\varphi^{(k)}(0)}{k!} \, \tfrac{1 - (-1)^{1+k-n} }{1 + k - n} =  \sum_{k=0}^{n-2} \tfrac{(-1)^k}{k!}\, \tfrac{1 - (-1)^{1+k-n}}{1 + k - n}\, \langle \delta_0^{\, \boldsymbol{\cdot} (k)}, \varphi \rangle \\
    &= - \sum_{k=0}^{n-2} \tfrac{(-1)^{n} + (-1)^{k}}{k! \, (n - k - 1)} \, \langle \delta_0^{\, \boldsymbol{\cdot} (k)}, \varphi \rangle.
\end{align*}
The terms of this sum for which $k$ and $n$ have opposite parity vanish. Together with \eqref{rd05_30e1}, this completes the proof of Theorem \ref{prop2}.
\end{proof}

\begin{remark}
It is not difficult to check that the correction term in \eqref{rd05_29e1} is equal to the $n^{\text{\scriptsize th}}$ derivative, in the sense of distributions, of the function
\begin{align*}
  - 1_{\{n \geq 2\}}  \sum_{k=0}^{n-2} \tfrac{(-1)^n + (-1)^k}{k!\, (n-k-1)!}\, x_+^{n-k-2},
\end{align*}
where $x_+ := \max(x, 0)$, and therefore, $\pvn$ defines a tempered distribution in the sense of \eqref{rd05_18e1}.
\end{remark}

\section{Fourier Transform of the Heaviside Function}\label{sec4}

The Heaviside function is the function $H: \R \to \R$ defined by 
$$
    H(x) = \begin{cases}  0 &\text{if } x < 0,  \\ 1 & \text{if } x \geq 0 . \end{cases}
$$
The next theorem gives the Fourier transform of $H$ in the sense of definition \eqref{rd05_19e3}. Notice that $h$ belongs neither to $L^1(\R)$ nor to $L^2(\R)$.

Let 
$$
    \sgn(x) = \begin{cases}  -1 &\text{if } x < 0,  \\ \ \ \, 1 & \text{if } x \geq 0 . \end{cases}
$$ 
Then $H = \frac12 (1+ \sgn)$ and we will begin by identifying $\cF(\sgn)$.

\begin{prop}\label{rd05_19t1}
$\cF(\sgn) = -2 i\,  \pvo\, $.
\end{prop}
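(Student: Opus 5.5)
We must show that for $\varphi\in\cS(\R)$, $\langle \sgn, \cF\varphi\rangle = -2i\,\langle \pvo,\varphi\rangle$. The plan is to write
\begin{align*}
\langle \sgn,\cF\varphi\rangle = \lim_{A\to+\infty}\int_0^A [\cF\varphi(x)-\cF\varphi(-x)]\,dx ,
\end{align*}
which is legitimate because $\cF\varphi\in\cS$ is integrable. By \eqref{rd05_21e1},
\begin{align*}
\cF\varphi(x)-\cF\varphi(-x) = \int_{-\infty}^{+\infty}\bigl(e^{-ixy}-e^{ixy}\bigr)\varphi(y)\,dy = -2i\int_{-\infty}^{+\infty}\sin(xy)\,\varphi(y)\,dy .
\end{align*}
On the bounded $x$-interval $[0,A]$ the integrand $\sin(xy)\varphi(y)$ is absolutely integrable in $(x,y)$, so Fubini's theorem (the one use announced in the Introduction) lets us exchange the integrals. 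Since $\int_0^A \sin(xy)\,dx = \frac{1-\cos(Ay)}{y}$, we get
\begin{align*}
\int_0^A [\cF\varphi(x)-\cF\varphi(-x)]\,dx = -2i\int_{-\infty}^{+\infty}\frac{1-\cos(Ay)}{y}\,\varphi(y)\,dy .
\end{align*}
This integrand is bounded near $y=0$ by $A|\varphi(y)|\cdot|Ay|/2\cdot$const, so there is no singularity.

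Next we fold the integral onto $(0,+\infty)$ using the change of variables $y\mapsto -y$, exactly as in the proof of Proposition~\ref{prop1}:
\begin{align*}
\int_{-\infty}^{+\infty}\frac{1-\cos(Ay)}{y}\,\varphi(y)\,dy=\int_0^{+\infty}\frac{\varphi(y)-\varphi(-y)}{y}\,dy-\int_0^{+\infty}\cos(Ay)\,\psi(y)\,dy,
\end{align*}
where $\psi(y):=\frac{\varphi(y)-\varphi(-y)}{y}$. Splitting is allowed because $\psi$ is continuous at $0$, since $\varphi$ is differentiable, and $\psi$ is integrable at $+\infty$. By the first formula of Proposition~\ref{prop1}, the first term equals $\langle\pvo,\varphi\rangle$.

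The main obstacle is to show that $\int_0^{+\infty}\cos(Ay)\,\psi(y)\,dy\to 0$ as $A\to\infty$. This is a Riemann–Lebesgue statement, and we want an elementary proof of it. Extended oddly in the sense $\psi(-y)=\psi(y)$, the function $\psi$ is even and $C^\infty$: indeed $\psi(y)=\int_{-1}^{1}\varphi'(ty)\,dt$ by the fundamental theorem of calculus, which gives smoothness with all derivatives bounded. Moreover $\psi'$ is integrable on $[0,\infty)$, because $\psi'(y)=\frac{\varphi'(y)+\varphi'(-y)}{y}-\frac{\varphi(y)-\varphi(-y)}{y^2}$ decays like $1/y^2$ for $y\ge1$. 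We then integrate by parts once:
\begin{align*}
\int_0^{+\infty}\cos(Ay)\,\psi(y)\,dy=\Bigl[\tfrac{\sin(Ay)}{A}\psi(y)\Bigr]_0^{+\infty}-\frac1A\int_0^{+\infty}\sin(Ay)\,\psi'(y)\,dy .
\end{align*}
The boundary term vanishes, and the remaining term is bounded by $\frac1A\int_0^\infty|\psi'|$, which tends to $0$. Taking the limit $A\to\infty$ then gives $\langle\cF(\sgn),\varphi\rangle=-2i\langle\pvo,\varphi\rangle$.
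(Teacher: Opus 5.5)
Your proof is correct and follows the paper's route: truncate the $x$-integral to a bounded interval, apply Fubini, integrate explicitly in $x$, identify the non-oscillatory part with $\langle\pvo,\varphi\rangle$ through Proposition~\ref{prop1}, and show that the oscillatory term involving $\psi(y)=\frac{\varphi(y)-\varphi(-y)}{y}$ vanishes in the limit. The differences are minor: you work with $\sin(xy)$ and fold onto $(0,+\infty)$ after the $x$-integration. You also prove that $\int_0^{+\infty}\cos(Ay)\,\psi(y)\,dy\to 0$ by one integration by parts, using $\psi(y)=\int_{-1}^{1}\varphi'(ty)\,dt$ and the integrability of $\psi'$, where the paper quotes the Riemann--Lebesgue lemma; this keeps the argument at the freshman-calculus level the paper aims for.
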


\begin{proof} By definition, for $\varphi \in \cS$, 
\begin{align*}
    \langle \cF(\sgn), \varphi \rangle &= \langle \sgn, \cF \varphi \rangle = \int_{-\infty}^{+ \infty}  \sgn(x)\, \cF \varphi(x)\, dx =  - \int_{-\infty}^{0} \cF \varphi(x)\, dx +  \int_{0}^{+\infty} \cF \varphi(x)\, dx \\ 
    &=  \int_{0}^{+\infty} [\cF \varphi(x) - \cF \varphi(-x)]\, dx.
\end{align*}
 Use the definition \eqref{rd05_21e1} of the Fourier transform of the integrable function $\varphi$ to write
\begin{align*}
  \langle \cF(\sgn), \varphi \rangle  &=   \int_{0}^{+\infty} dx\, \int_{-\infty}^{+ \infty} dy\, \left[e^{-i x y} - e^{i x y}\right]\, \varphi(y).
 \end{align*} 
Use distributivity to split the integral into two integrals, and in the second integral, do the change of variables $z = -y$, and then relabel the integration variable to see that
\begin{align*}
  \langle \cF(\sgn), \varphi \rangle  &=  \int_{0}^{+\infty} dx\, \int_{-\infty}^{+ \infty} dy\, e^{-i x y}\, [\varphi(y) - \varphi(-y)]\\ 
    &=  \lim_{N \to +\infty} \int_{0}^{N} dx\, \int_{-\infty}^{+ \infty} dy\,  e^{-i x y}\, [\varphi(y) - \varphi(-y)] .
 \end{align*} 
 Since the assumptions of Fubini's theorem are satisfied, we can permute the two integrals to obtain
  \begin{align*}
  \langle \cF(\sgn), \varphi \rangle
    &= \lim_{N \to +\infty}  \int_{-\infty}^{+ \infty} dy\,[\varphi(y) - \varphi(-y)]  \int_{0}^{N} dx\, e^{-i x y} \\ 
    &=   \lim_{N \to +\infty}  \int_{-\infty}^{+ \infty} dy\, [\varphi(y) - \varphi(-y)] \left[\frac{e^{-i x y} }{-iy} \right]_{x = 0}^{x = N} \\ 
    &= \lim_{N \to +\infty}  \int_{-\infty}^{+ \infty} dy\, [\varphi(y) - \varphi(-y)]  \left[\frac{1}{iy} -  \frac{e^{- i N y}}{iy}  \right] .
 \end{align*} 
 The integral can be written as the difference of two integrals, and since the first one does not depend on $N$,
  \begin{align}
   \langle \cF(\sgn), \varphi \rangle
    &=    -i \int_{-\infty}^{+ \infty} dy\, \frac{\varphi(y) - \varphi(-y)}{y} \ + i\, \lim_{N \to +\infty}  \int_{-\infty}^{+ \infty} dy\, e^{- i N y}\,  \psi(y),
\label{rd05_19e4}
\end{align}
where 
$$
   \psi(y) := \begin{cases} \frac{\varphi(y) - \varphi(-y)}{y} &\qquad\text{if } y \neq 0, \\[5pt]
                      2\, \varphi'(0) &\qquad \text{if } y = 0.
                  \end{cases}
$$
Then $\psi$ is a continuous and $C^1$-function that is integrable over $\R$ (in fact, it belongs to $\cS(\R)$, but we do not need this), therefore the limit in \eqref{rd05_19e4}, which is $\lim_{N \to +\infty} \cF\psi(N)$, vanishes according to a well-known property of the Fourier transform.

Subtracting and adding $\varphi(0)$ on the numerator of the first integral in \eqref{rd05_19e4}, we see that
 \begin{align*}
  \langle \cF(\sgn), \varphi \rangle
    &=  - 2 i \int_{-\infty}^{+ \infty} dy\, \frac{\varphi(y) - \varphi(0)}{y} =  -2 i \, \left\langle \pvo, \varphi \right\rangle,
\end{align*}    
according to Proposition \ref{prop1}. This proves Proposition \ref{rd05_19t1}.
\end{proof}

\begin{thm}\label{rd05_19t2}
The Fourier transform $\cF H \in \cS'(\R)$ of the Heaviside function $H$ is given by
$$
   \cF H  = - i\,  \pvo + \pi\,  \delta_0 .
$$
\end{thm}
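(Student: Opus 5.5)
We will write $H = \frac12(1+\sgn)$ and use linearity of the Fourier transform on $\cS'(\R)$: for $\varphi\in\cS$,
$$
\langle \cF H,\varphi\rangle = \langle H,\cF\varphi\rangle = \tfrac12\langle 1,\cF\varphi\rangle + \tfrac12\langle \sgn,\cF\varphi\rangle = \tfrac12\langle \cF 1,\varphi\rangle + \tfrac12\langle \cF(\sgn),\varphi\rangle .
$$
Here we use that $\int H\,\cF\varphi = \frac12\int \cF\varphi + \frac12\int \sgn\,\cF\varphi$, where both integrals converge because $\cF\varphi\in\cS$. By Proposition \ref{rd05_19t1}, the second term equals $-i\langle\pvo,\varphi\rangle$. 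It therefore remains to show that $\cF 1 = 2\pi\,\delta_0$, that is,
$$
\int_{-\infty}^{+\infty}\cF\varphi(x)\,dx = 2\pi\,\varphi(0).
$$

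For this identity we will mimic the proof of Proposition \ref{rd05_19t1}. Write the integral as $\lim_{N\to\infty}\int_{-N}^{N}\cF\varphi(x)\,dx$ and insert definition \eqref{rd05_21e1}. On $[-N,N]\times\R$ the integrand $e^{-ixy}\varphi(y)$ is absolutely integrable, so Fubini's theorem lets us exchange the integrals. This gives
$$
\int_{-N}^{N}\cF\varphi(x)\,dx = \int_{-\infty}^{+\infty}\varphi(y)\,\frac{2\sin(Ny)}{y}\,dy .
$$
We then subtract and add $\varphi(0)$ on a bounded window. Since $\frac{2\sin(Ny)}{y}$ is even, we may first replace $\varphi(y)$ by its even part $\frac12[\varphi(y)+\varphi(-y)]$, and then write
$$
\int \varphi(y)\frac{2\sin Ny}{y}\,dy = \varphi(0)\int_{-\infty}^{+\infty}\frac{2\sin(Ny)}{y}\,dy + \int_{-\infty}^{+\infty} \sin(Ny)\,\chi(y)\,dy ,
$$
where $\chi(y) := \frac{\varphi(y)+\varphi(-y)-2\varphi(0)}{y}$, extended by continuity with $\chi(0)=0$. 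The first integral is a convergent improper integral. After the substitution $u=Ny$ it equals $2\int_{-\infty}^{\infty}\frac{\sin u}{u}\,du = 2\pi$ for every $N$, by the classical Dirichlet integral.

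The main obstacle is the second term, which must tend to $0$. The function $\chi$ is continuous, but it is not integrable at infinity, because of the $-2\varphi(0)/y$ tail. So the Riemann–Lebesgue property cannot be applied to $\chi$ directly. To handle this, split the integral at $|y|=1$. On $[-1,1]$, $\chi$ is continuous, hence integrable, and $\int_{-1}^1\sin(Ny)\chi(y)\,dy\to 0$ by Riemann–Lebesgue, as used at the end of the proof of Proposition \ref{rd05_19t1}. On $|y|>1$, the part $\frac{\varphi(y)+\varphi(-y)}{y}$ is integrable, so Riemann–Lebesgue applies again. The remaining piece is $-2\varphi(0)\int_{|y|>1}\frac{\sin Ny}{y}\,dy = -4\varphi(0)\int_N^\infty\frac{\sin u}{u}\,du$, which tends to $0$ because it is the tail of a convergent improper integral.

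Alternatively, one can avoid this split by subtracting $\varphi(0)e^{-y^2}$ instead of $\varphi(0)$. Then $\chi$ is integrable and the Gaussian term is computed explicitly, but this requires the Fourier transform of the Gaussian. The first route is more elementary.

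Combining the pieces gives $\cF 1=2\pi\delta_0$, and therefore
$$
\cF H = \tfrac12(2\pi\delta_0) - i\,\pvo = -i\,\pvo + \pi\,\delta_0,
$$
as claimed in Theorem \ref{rd05_19t2}.
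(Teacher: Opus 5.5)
Your proof is correct, and its skeleton is the paper's: write $H=\frac12(1+\sgn)$, use $\cF(\sgn)=-2i\,\pvo$, and reduce everything to $\cF 1=2\pi\,\delta_0$, i.e.\ $\int_{-\infty}^{+\infty}\cF\varphi(x)\,dx=2\pi\,\varphi(0)$.

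The only difference is in this last step.

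\textbf{The paper's route.} It reads the identity off from the Fourier inversion formula $\varphi(y)=\frac{1}{2\pi}\int_{-\infty}^{+\infty} e^{ixy}\,\cF\varphi(x)\,dx$, evaluated at $y=0$. This takes one line, but it presupposes the inversion theorem from the $L^1$ theory.

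\textbf{Your route.} You reprove this special case of inversion by hand. You use Fubini on the strip $[-N,N]\times\R$, the kernel $2\sin(Ny)/y$, the Dirichlet integral $\int_{-\infty}^{+\infty}\frac{\sin u}{u}\,du=\pi$, and the Riemann--Lebesgue lemma. You also correctly handle the non-integrable tail $-2\varphi(0)/y$ of $\chi$ by splitting at $|y|=1$. This makes the argument self-contained and in the same spirit as the paper's own proof for $\sgn$. The price is that you rely on the value of the Dirichlet integral, which is itself a nontrivial classical fact.

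\textbf{A minor point of rigor.} You split the absolutely convergent integral $\int\varphi(y)\,\frac{2\sin Ny}{y}\,dy$ into two pieces that are only conditionally convergent. That split should be read as a limit of integrals over bounded intervals $[-R,R]$. It is legitimate because $\int\frac{\sin Ny}{y}\,dy$ converges as an improper integral.
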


\begin{proof}
The conclusion follows from Proposition \ref{rd05_19t1} and the facts that $H = \frac12 (1 + \sgn)$ and $\cF(1) = 2 \pi\,\delta_0$, as is easily verified from the definition \eqref{rd05_19e3} and the Fourier inversion formula.
\end{proof}

\begin{remark}
As a consequence of Theorem \ref{rd05_19t2}, we see that $\cF H$ can be expressed as in \eqref{rd05_18e1} if we take $n=2$ and $f(x) = - i\, h(x) + \pi \max(x, 0)$. 
\end{remark}

\noindent{\sc Acknowledgement.} The author thanks Yuta Wakasugi (Hiroshima University) and, particularly, Boris Buffoni (\'Ecole Polytechnique Fédérale de Lausanne) for pointing out mistakes in the first version of this paper.

\vskip 16pt

{\bf Author information:}
\vskip 16pt

{\scshape
Robert C.~Dalang}

Institut de Mathématiques

\'Ecole Polytechnique Fédérale de Lausanne (EPFL)

CH-1015 Lausanne

Switzerland
\smallskip 

robert.dalang@epfl.ch


\begin{thebibliography}{99}
\bibitem{bracewell} Bracewell, R.N. {\em The Fourier transform and its applications.} McGraw-Hill Ser. Electr. Engrg. Circuits Systems. McGraw-Hill Book Co., New York, 1986.

\bibitem{GV} Gel'fand, I.M. \& Shilov, G.E. {\em Generalized Functions, Vol.~1:} Properties and operations. Reprint of the 1964 English translation. AMS Chelsea Publishing, Providence, RI, 2016.

\bibitem{kammler} Kammler, D. {\em A first course in Fourier analysis} (2nd ed.). Cambridge University Press, Cambridge, 2007.

\bibitem{schwartz} Schwartz, L. {\em Théorie des distributions.} Publications de l'Institut de Mathématique de l'Université de Strasbourg, IX-X. Hermann, Paris, 1966.

\end{thebibliography}
\end{document}